%
%
%

\documentclass[reqno]{amsart}



\usepackage{amsmath}
\usepackage{amsfonts}
\usepackage{amssymb,enumerate}
\usepackage{amsthm,stmaryrd}
\usepackage[all]{xy}
\usepackage[draft=false]{hyperref}
\usepackage{tikz}
\usetikzlibrary{matrix,arrows,decorations.pathmorphing}
\usepackage{comment}
\usepackage[obeyDraft,color=green!40]{todonotes}
\setlength{\marginparwidth}{.7in}
\synctex=1


\theoremstyle{plain}
\newtheorem{lem}{Lemma}[section]

\newtheorem{thm}[lem]{Theorem}

\theoremstyle{definition}

\newtheorem{fact}[lem]{Fact}

\newtheorem{assumption}[lem]{Assumption}

\numberwithin{equation}{lem}

\renewenvironment{proof}{\vspace{1ex}\noindent{\textbf{Proof:}}\hspace{0.5em}}
{\hfill\qed\vspace{1ex}}




\newcommand{\D}{\mathcal{D}}

\newcommand{\cat}[1]{\mathcal{#1}}

\newcommand{\catf}{\cat{F}}
\newcommand{\cati}{\cat{I}}

\newcommand{\catac}{\cat{A}_C}

\newcommand{\catbc}{\cat{B}_C}

\newcommand{\catic}{\cat{I}_C}

\newcommand{\catpc}{\cat{P}_C}
\newcommand{\catfc}{\cat{F}_C}


\newcommand{\pd}{\operatorname{pd}}

\newcommand{\id}{\operatorname{id}}	
\newcommand{\fd}{\operatorname{fd}}

\newcommand{\pcpd}{\catpc\text{-}\pd}
\newcommand{\fcpd}{\catfc\text{-}\pd}

\newcommand{\icid}{\catic\text{-}\id}


\newcommand{\depth}{\operatorname{depth}}


\newcommand{\ext}{\operatorname{Ext}}	
\newcommand{\rhom}{\mathbf{R}\!\operatorname{Hom}}	
\newcommand{\lotimes}{\otimes^{\mathbf{L}}}

\newcommand{\Hom}{\operatorname{Hom}}	

\newcommand{\spec}{\operatorname{Spec}}

\newcommand{\shift}{\mathsf{\Sigma}}


\newcommand{\ideal}[1]{\mathfrak{#1}}
\newcommand{\m}{\ideal{m}}

\newcommand{\p}{\ideal{p}}



\newcommand{\supp}{\operatorname{supp}}


\newcommand{\bbz}{\mathbb{Z}}


\newcommand{\xra}{\xrightarrow}



\newcommand{\x}{\mathbf{x}}


\renewcommand{\geq}{\geqslant}

\renewcommand{\hom}{\Hom}

\begin{document}

\title{Using semidualizing complexes to detect Gorenstein rings}

\thanks{Sather-Wagstaff was supported in part by a grant from the NSA. Totushek was supported in part by North Dakota EPSCoR and National Science Foundation Grant EPS-0814442}

\subjclass[2010]{13D02, 13D05, 13D09}

\keywords{Depth, flat dimension, Gorenstein rings, injective dimension, semidualizing complex, semidualizing module, small support}

\author{Sean Sather-Wagstaff}
\address{Department of Mathematics,
NDSU Dept \# 2750,
PO Box 6050,
Fargo, ND 58108-6050
USA}

\email{sean.sather-wagstaff@ndsu.edu}

\urladdr{http://www.ndsu.edu/pubweb/\~{}ssatherw/}
\author{Jonathan Totushek}
\email{jonathan.totushek@ndsu.edu}

\urladdr{http://www.ndsu.edu/pubweb/\~{}totushek/}

\begin{abstract}
A result of Foxby states that if there exists a complex with finite depth, finite flat dimension, and finite injective dimension over a local ring $R$, then $R$ is Gorenstein. In this paper we investigate some homological dimensions involving a semidualizing complex and improve on Foxby's result by answering a question of Takahashi and White. In particular, we prove for a semidualizing complex $C$, if there exists a complex with finite depth, finite $\catf_C$-projective dimension, and finite $\catic$-injective dimension over a local ring $R$, then $R$ is Gorenstein.
\end{abstract}

\maketitle

\section{Introduction}

Throughout this paper let $R$ be a commutative noetherian ring with identity. A result of Foxby states that, if there exists an $R$-complex $X$ that has finite flat dimension and finite injective dimension, then $R_{\p}$ is a Gorenstein ring for all $\p\in\supp_R(X)$; see Section \ref{141222:3} for definitions. In this paper we generalize this theorem using a semidualizing $R$-module.
A finitely generated $R$-module $C$ is \textit{semidualizing} if $R\cong \hom_R(C,C)$ and $\ext^{\geq1}_R(C,C) = 0$. Semidualizing modules are useful, e.g., for proving results about Bass numbers \cite{avramov:rhafgd,sather:bnsc} and compositions of local ring homomorphisms \cite{avramov:rhafgd,sather:cidfc}. 

Takahashi and White \cite{takahashi:hasm} define the $C$-projective dimension for an $R$-module $M$ (denoted $\pcpd_R(M)$) to be the length of the shortest resolution by modules of the form $C\otimes_R P$ where $P$ is a projective $R$-module. They define $C$-injective dimension ($\icid$) dually. 
In their investigation Takahashi and White posed the following question: When $R$ is a local Cohen-Macaulay ring admitting a dualizing module and $C$ is a semidualizing $R$-module, if there exists an $R$-module $M$ such that  $\pcpd_R(M)<\infty$ and $\icid_R(M)<\infty$, must $R$ be Gorenstein? If $M$ has infinite depth, then the answer is false. However, if we additionally assume that $M$ has finite depth, then an affirmative answer to this question would yield a generalization of Foxby's theorem.

Partial answers to Takahashi and White's question is given by Araya and Takahashi \cite{araya:agoatof} and Sather-Wagstaff and Yassemi \cite{sather:mfhdsdm}. We give a complete answer to this question in the following result; see Section \ref{141222:3} for background on complexes and the derived category, and Theorem \ref{1405267:1} for the proof.

\begin{thm}
\label{141215:1}
	Let $C$ be a semidualizing $R$-complex. If there exists an $R$-complex $X\in \D_{\operatorname{b}}(R)$ such that $\fcpd_R(X)<\infty$ and $\icid_R(X)<\infty$, then $R_{\p}$ is Gorenstein for all $\p\in \supp_R(X)$.
\end{thm}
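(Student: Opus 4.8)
The plan is to reduce the statement to Foxby's theorem, used as a black box.

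\textbf{Reduction.} Fix $\p\in\supp_R(X)$. Since $C_{\p}$ is a semidualizing $R_{\p}$-complex, a bounded $\catfc$-resolution localizes to a bounded $\catf_{C_{\p}}$-resolution, $\Hom_R(C,I)_{\p}\cong\Hom_{R_{\p}}(C_{\p},I_{\p})$ with $I_{\p}$ injective, and $X_{\p}\not\simeq 0$, it suffices to prove: if $R$ is local and $X\not\simeq0$ in $\D_{\operatorname{b}}(R)$ has $\fcpd_R(X)<\infty$ and $\icid_R(X)<\infty$, then $R$ is Gorenstein. Replacing $R$ by $\comp R$ — using that the semidualizing property, membership in the Auslander and Bass classes $\catac,\catbc$, the invariants $\fcpd$ and $\icid$, Gorensteinness, and $X\not\simeq0$ all ascend to the completion — I may assume $R$ is complete, hence admits a dualizing complex $D$.

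\textbf{Two presentations of $X$.} By the standard characterization of finite $\catfc$-projective dimension (in the spirit of Takahashi--White), $X\in\catbc$ and $\fd_R\rhom_R(C,X)<\infty$; set $Y:=\rhom_R(C,X)$, so $\fd_R(Y)<\infty$, $Y\not\simeq0$, and $X\simeq C\lotimes_R Y$. Dually, $\icid_R(X)<\infty$ gives $X\in\catac$ and $\id_R(C\lotimes_R X)<\infty$. Since $C\lotimes_R X$ has finite injective dimension it lies in the Bass class of $D$, so $C\lotimes_R X\simeq D\lotimes_R W$ where $W:=\rhom_R(D,C\lotimes_R X)$ has finite projective dimension and $W\not\simeq0$; as $X\in\catac$, tensor-evaluation yields $X\simeq\rhom_R(C,C\lotimes_R X)\simeq\rhom_R(C,D\lotimes_R W)\simeq\rhom_R(C,D)\lotimes_R W$. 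Thus $X$ is presented both as $C\lotimes_R Y$ and as $B\lotimes_R W$, where $B:=\rhom_R(C,D)$ is again semidualizing and $Y,W\not\simeq0$ have finite flat dimension.

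\textbf{Isolating $C\lotimes_R C$.} Applying $\rhom_R(C,-)$ to the two presentations, using that complexes of finite flat dimension lie in $\catac$ (so $\rhom_R(C,C\lotimes_R Y)\simeq Y$) and tensor-evaluation, one obtains $Y\simeq\rhom_R(C,B)\lotimes_R W\simeq\rhom_R(C\lotimes_R C,D)\lotimes_R W$. Finite flat dimension over a local ring is detected by $-\lotimes_R k$, and tensoring by the nonzero finite-flat-dimension complex $W$ only multiplies $\HH(-\lotimes_R k)$ by a nonzero finite-dimensional graded $k$-space; hence $\fd_R(Y)<\infty$ forces $\fd_R\rhom_R(C\lotimes_R C,D)<\infty$, so this homologically finite complex is perfect. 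Independently, $C\lotimes_R C$ is homologically bounded below, and the amplitude inequality applied to $C\lotimes_R C\lotimes_R Y=C\lotimes_R X$ (homologically bounded, being of finite injective dimension) forces $C\lotimes_R C$ to be homologically bounded. A homologically bounded $C\lotimes_R C$ forces $C\in\catac$ and makes $C\lotimes_R C$ semidualizing, and $D$-duality applied to the perfect complex $\rhom_R(C\lotimes_R C,D)$ shows $\id_R(C\lotimes_R C)<\infty$; so $C\lotimes_R C$ is a dualizing complex for $R$.

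\textbf{The crux.} It remains to conclude that $C\simeq\shift^{n}R$ for some $n$, and this is where I expect the real difficulty to lie. Tensoring $C\lotimes_R C\simeq\shift^{a}D$ with $k$ and applying K\"unneth over $k$ gives $P_C(t)^{2}=t^{a}P_D(t)$ for the graded Betti series; combined with $P_D(t)=I_R(t)$ (the Bass series of $R$) and $I_R(t)=P_C(t)I_C(t)$ (from $\rhom_R(C,C)\simeq R$) this gives $P_C(t)=t^{a}I_C(t)$, so $\pd_R(C)<\infty$ precisely when $\id_R(C)<\infty$. Excluding the remaining possibility — both infinite, with $C\lotimes_R C$ still dualizing — is the delicate point; I would handle it through the rigidity/finiteness of semidualizing complexes over a local ring (for the module case this is essentially what the partial results of Araya--Takahashi and of Sather-Wagstaff--Yassemi supply), forcing $\pd_R(C)<\infty$. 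A semidualizing complex of finite projective dimension over a local ring is isomorphic to a shift of $R$; hence $\fcpd_R$ and $\icid_R$ agree, up to that shift, with $\fd_R$ and $\id_R$, so $X$ is a nonzero complex in $\D_{\operatorname{b}}(R)$ of finite flat dimension and finite injective dimension, and Foxby's theorem shows $R$ is Gorenstein.
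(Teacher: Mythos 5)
Your overall strategy --- producing two presentations $X\simeq C\lotimes_R Y\simeq\rhom_R(C,D)\lotimes_R W$ with $Y,W$ of finite flat dimension, running a K\"unneth argument over $k$ to force finiteness of a Hom-complex built from $C$ and $\rhom_R(C,D)$, and then invoking classification results for semidualizing complexes --- is essentially the one the paper uses in Case 1 of Theorem \ref{140321:3}. But your reduction contains a genuine error. From $\p\in\supp_R(X)$ you carry forward only ``$X_\p\not\simeq 0$'' and reduce to the claim: \emph{$R$ local, $X\not\simeq 0$ in $\D_{\operatorname{b}}(R)$, $\fcpd_R(X)<\infty$ and $\icid_R(X)<\infty$ imply $R$ Gorenstein}. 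That claim is false, already for $C=R$: over any non-Gorenstein local domain $R$ the fraction field $\operatorname{Frac}(R)$ is a nonzero module of flat dimension zero and injective dimension zero. What $\p\in\supp_R(X)$ actually gives is the strictly stronger condition $\kappa(\p)\lotimes_{R_\p}X_\p\not\simeq 0$, i.e.\ $\depth_{R_\p}(X_\p)<\infty$ (via \cite[Proposition 3.6]{sather:saaffc} and \cite[Proposition 2.8]{foxby:bcfm}), and this is the hypothesis you must keep --- it is exactly the one the introduction warns cannot be dropped. The step of your proof that actually breaks is the K\"unneth step: ``tensoring by the nonzero finite-flat-dimension complex $W$ only multiplies $\HH(-\lotimes_R k)$ by a nonzero graded $k$-space'' is wrong, since $W\not\simeq 0$ does not imply $k\lotimes_R W\not\simeq 0$ (again $W=\operatorname{Frac}(R)$). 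What you need is $\m\in\supp_R(W)$, which follows from $\supp_R(W)=\supp_R(X)\ni\m$ (semidualizing complexes have full support), i.e.\ from finite depth; the same objection applies to your appeal to the amplitude inequality. You would also need to check that finite depth ascends to $\comp{R}$, which the paper does explicitly.

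Separately, your ``crux'' paragraph is not a proof. Even granting that $\rhom_R(C\lotimes_R C,D)$ is perfect and $C\lotimes_R C$ is dualizing up to shift, you must still exclude $\pd_R(C)=\id_R(C)=\infty$, and you only gesture at ``rigidity/finiteness of semidualizing complexes.'' This is precisely the nontrivial input the paper supplies by citation: \cite[Theorem 1.4]{frankild:rbsc} converts the perfection of the relevant Hom-complex into $\shift^{n}C\simeq\rhom_R(C,D)$, and \cite[Corollary 3.4]{frankild:sdcms} (the fixed-point theorem for dagger duality) then yields that $R$ is Gorenstein. Without invoking results of this strength, the argument is incomplete at its most delicate point.
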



\section{Background}\label{141222:3}

Let $\D(R)$ denote the derived category of complexes of $R$-modules, indexed homologically (see e.g. \cite{gelfand:moha,hartshorne:rad}). A complex $X\in \D(R)$ is \textit{homologically bounded below}, denoted $X\in \D_{\operatorname{+}}(R)$, if $\operatorname{H}_i(X)=0$ for all $i\ll 0$. It is \textit{homologically bounded above}, denoted $X\in\D_{-}(R)$, if $\operatorname{H}_i(X)=0$ for all $i\gg 0$. It is \textit{homologically degreewise finite}, denoted $X\in\D^{\operatorname{f}}(R)$, if $\operatorname{H}_i(X)$ is finitely generated for all $i$. Set $\D_{\operatorname{b}}(R) = \D_{+}(R) \cap \D_{-}(R)$ and $\D^{\operatorname{f}}_{*}(R)= \D^{\operatorname{f}}(R)\cap \D_{*}(R)$ for each $*\in \{+,-,\text{b}\}$. Complexes in $\D^{\text{f}}_{\text{b}}(R)$ are called \textit{homologically finite}. Isomorphisms in $\D(R)$ are identified by the symbol $\simeq$.

For $R$-complexes $X$ and $Y$, let $\inf(X)$ and $\sup(X)$ denote the infimum and supremum, respectively, of the set $\{i\in \bbz \mid \operatorname{H}_i(X) = 0\}$ with the convention $\sup(\emptyset)=-\infty$ and $\inf(\emptyset)=\infty$. Let $X\lotimes_R Y$ and $\rhom_R(X,Y)$ denote the left-derived tensor product and right-derived homomorphism complexes, respectively.

	If $(R,\m,k)$ is local, the \textit{depth} and \textit{width} of an $R$-complex $X\in \D(R)$ are defined by Foxby~\cite{foxby:bcfm} and Yassemi \cite{yassemi:wcm} as 
	\begin{align*}
		\depth_R(X) &:= -\sup(\rhom_R(k,X))\\
		\operatorname{width}_R(X) &:= \inf(k\lotimes_R X).
	\end{align*}
	One relation between these quantities is given in the following.

	The \textit{small support} of an $R$-complex $X\in \D(R)$ is defined by Foxby~\cite{foxby:bcfm} as follows:
	\[
		\supp_R(X) := \{\p\in \spec(R) \mid \kappa(\p)\lotimes_R X \not\simeq 0\}.
	\]
An important property of the small support is given in the following.

\begin{fact}[{\cite[Proposition 2.7]{foxby:bcfm}}]
\label{140325:1}
	If $X,Y\in \D(R)$, then
	\[
		\supp_R(X\lotimes_R Y) = \supp_R(X)\cap \supp_R(Y).
	\]
\end{fact}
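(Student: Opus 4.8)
The plan is to reduce the statement, prime by prime, to the Künneth formula over a field. Fix $\p\in\spec(R)$ and abbreviate $\kappa=\kappa(\p)$. By the definition of small support, $\p$ belongs to $\supp_R(X\lotimes_R Y)$ exactly when $\kappa\lotimes_R(X\lotimes_R Y)\not\simeq 0$, so it suffices to prove that this complex is nonzero in $\D(R)$ if and only if both $\kappa\lotimes_R X$ and $\kappa\lotimes_R Y$ are nonzero.

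First I would apply associativity of the derived tensor product to obtain
\[
\kappa\lotimes_R(X\lotimes_R Y)\simeq(\kappa\lotimes_R X)\lotimes_R Y.
\]
Computing $\kappa\lotimes_R X$ with a K-flat resolution of $X$ produces a complex of $\kappa$-vector spaces, so the $R$-action on $W:=\kappa\lotimes_R X$ factors through $R\to\kappa$. This lets me invoke the base-change isomorphism $W\lotimes_R Y\simeq W\lotimes_\kappa(\kappa\lotimes_R Y)$, which follows from associativity together with the unit isomorphism $W\lotimes_\kappa\kappa\simeq W$. Chaining the two displays gives
\[
\kappa\lotimes_R(X\lotimes_R Y)\simeq(\kappa\lotimes_R X)\lotimes_\kappa(\kappa\lotimes_R Y),
\]
an isomorphism of complexes over the field $\kappa$.

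The crux is then the Künneth formula over $\kappa$. Since every $\kappa$-module is free, there are no higher Tor contributions, and in each degree $n$ one has
\[
\HH_n\bigl((\kappa\lotimes_R X)\lotimes_\kappa(\kappa\lotimes_R Y)\bigr)\cong\bigoplus_{i+j=n}\HH_i(\kappa\lotimes_R X)\otimes_\kappa\HH_j(\kappa\lotimes_R Y).
\]
A tensor product of nonzero $\kappa$-vector spaces is nonzero, so the right-hand side vanishes for every $n$ precisely when $\HH(\kappa\lotimes_R X)=0$ or $\HH(\kappa\lotimes_R Y)=0$; indeed, if both homologies are nonzero, choosing $i,j$ with $\HH_i(\kappa\lotimes_R X)\neq 0$ and $\HH_j(\kappa\lotimes_R Y)\neq 0$ exhibits a nonzero summand in degree $i+j$. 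Translating back through the definition of small support yields $\p\in\supp_R(X\lotimes_R Y)$ if and only if $\p\in\supp_R(X)$ and $\p\in\supp_R(Y)$, which is the asserted equality.

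The step I expect to require the most care is the reduction to a tensor product over $\kappa$: for unbounded $X$ and $Y$ the derived functors must be computed with K-flat resolutions, and I would record the resolution conventions explicitly so that associativity and the base-change isomorphism are justified in $\D(R)$ without any boundedness or finiteness hypotheses. Once the computation is transported to $\D(\kappa)$, the field hypothesis makes the Künneth step, and hence the whole argument, routine.
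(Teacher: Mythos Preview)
The paper does not give its own proof of this statement: it is recorded as a Fact with a citation to Foxby's paper \cite{foxby:bcfm}, so there is nothing to compare against beyond the reference. Your argument is correct and is exactly the standard proof one would expect in Foxby's original: reduce to the residue field $\kappa(\p)$, use associativity and base change to rewrite $\kappa(\p)\lotimes_R(X\lotimes_R Y)$ as a derived tensor product over the field $\kappa(\p)$, and then apply the K\"unneth formula there. Your remarks about needing K-flat resolutions for unbounded complexes are appropriate, and the K\"unneth step over a field is unproblematic even without boundedness since every complex of vector spaces is formal.
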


	The \textit{flat dimension} of an $R$-complex $X\in \D_{+}(R)$ is 
	\[
		\fd_R(X) := \inf\left\{n\in \bbz 
		\left|
			\begin{array}{l}
				F\xra{\simeq} X \text{ where } F \text{ is a bounded below complex of}\\
				\text{flat }R \text{-modules such that } F_i=0 \text{ for all } i>n
			\end{array}
		\right.\right\}.
	\]
	The \textit{injective dimension} of an $R$-complex $Y\in \D_{-}(R)$ is  
	\[
		\id_R(X) := \inf\left\{n\in \bbz 
		\left|
			\begin{array}{l}
				Y\xra{\simeq} I \text{ where } I \text{ is a bounded above complex of}\\
				\text{injective }R \text{-modules such that } I_j=0 \text{ for all } j>-n
			\end{array}
		\right.\right\}.
	\]

	A homologically finite $R$-complex $C$ is \textit{semidualizing} if the homothety morphism $\chi^R_C:R\to \rhom_R(C,C)$ is an isomorphism in $\D(R)$. An $R$-complex $D$ is \textit{dualizing} if it is semidualizing and has finite injective dimension. Dualizing complexes were introduced by Grothendieck and Hartshorne \cite{hartshorne:rad}, and semidualizing complexes originate in work of Foxby \cite{foxby:gmarm}, Avramov and Foxby \cite{avramov:rhafgd}, and Christensen \cite{christensen:scatac}. For the non-commutative case, see, e.g., Araya, Takahashi, and Yoshino \cite{takahashi:hiatsb}.

\begin{assumption}
For the rest of this paper, let $C$ be a semidualizing $R$-complex.
\end{assumption}

	The following classes were defined in \cite{avramov:rhafgd,christensen:scatac}.
	The \textit{Auslander Class} with respect to $C$ is the full subcategory $\catac(R)\subseteq \D_{\mathrm{b}}(R)$ such that a complex $X$ is in $\catac(R)$ if and only if $C\lotimes_R X\in \D_{\mathrm{b}}(R)$ and the natural morphism $\gamma^C_X:X\to \rhom_R(C,C\lotimes_R X)$ is an isomorphism in $\D(R)$.
	Dually, the \textit{Bass Class} with respect to $C$ is the full subcategory $\catbc(R)\subseteq \D_{\mathrm{b}}(R)$ such that a complex $Y$ is in $\catbc(R)$ if and only if $\rhom_R(C,Y)\in\D_{\mathrm{b}}(R)$ and the natural morphism $\xi^C_Y:C\lotimes_R \rhom_R(C,Y) \to Y$ is an isomorphism in $\D(R)$.	

The $\catf_C$-projective dimension and $\cati_C$-injective dimension of an $R$-complex $X\in \D_{\text{b}}(R)$ are defined in \cite{totushek:hdwrtasc} as follows:
	\begin{align*}
		\fcpd_R(X) &:= \sup(C) + \fd_R(\rhom_R(C,X))\\
		\icid_R(X) &:= \sup(C) + \id_R(C\lotimes_R X).
	\end{align*}

The following fact shows that the above definitions are consistent with the ones given by Takahashi and White \cite{takahashi:hasm} when $C$ is a semidualizing module.

\begin{fact}[{\cite[Theorem 3.9]{totushek:hdwrtasc}}]
\label{141201:2}
	Let $X\in \D_{\operatorname{b}}(R)$.
	\begin{enumerate}[\rm (a)]
	\item We have $\fcpd_R(X)<\infty$ if and only if there exists an $R$-complex $F\in \D_{\operatorname{b}}(R)$ such that $\fd_R(F)<\infty$ and $X\simeq C\lotimes_R F$. When these conditions are satisfied, one has $F\simeq \rhom_R(C,X)$ and $X\in \catbc(R)$.\label{141201:2a}
	\item We have $\icid_R(X)<\infty$ if and only if there exists an $R$-complex $J\in \D_{\operatorname{b}}(R)$ such that $\id_R(J)<\infty$ and $X\simeq \rhom_R(C,J)$. When these conditions are satisfied, one has $J\simeq C\lotimes_R X$ and $X\in \catac(R)$.\label{141201:2b}
	\end{enumerate}
\end{fact}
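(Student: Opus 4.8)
The plan is to localize, then reduce everything to the homologically finite complex $C$, and finally force $C$ to be a shift of $R$ by comparing Poincar\'e and Bass series.

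First I would reduce to the local case. Since the Gorenstein property and the small support are local, fix $\p\in\supp_R(X)$ and pass to $R_\p$: here $C_\p$ is semidualizing, $\fcpd_{R_\p}(X_\p)<\infty$, $\icid_{R_\p}(X_\p)<\infty$, and $\p R_\p\in\supp_{R_\p}(X_\p)$, because all of these dimensions and the small support behave well under localization. Thus it suffices to assume $(R,\m,k)$ is local with $\m\in\supp_R(X)$ and to show $R$ is Gorenstein. By Fact~\ref{141201:2}(a), $\fcpd_R(X)<\infty$ yields $F:=\rhom_R(C,X)$ with $\fd_R F<\infty$ and $X\simeq C\lotimes_R F$; by Fact~\ref{141201:2}(b), $\icid_R(X)<\infty$ yields $J:=C\lotimes_R X$ with $\id_R J<\infty$ and $X\simeq\rhom_R(C,J)$. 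Since $C$ is semidualizing, $\spec R=\supp_R R=\supp_R\rhom_R(C,C)\subseteq\supp_R C$, so $\supp_R C=\spec R$ and Fact~\ref{140325:1} gives $\supp_R F=\supp_R(C\lotimes_R F)=\supp_R X\ni\m$.

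Combining the two descriptions of $X$ gives $J\simeq C\lotimes_R C\lotimes_R F$, and I would next prove a cancellation lemma: if $\fd_R F<\infty$, $\m\in\supp_R F$, and $\id_R(G\lotimes_R F)<\infty$, then $\id_R G<\infty$. Tensor-evaluation gives $\rhom_R(k,G\lotimes_R F)\simeq\rhom_R(k,G)\lotimes_R F\simeq\rhom_R(k,G)\otimes_k(k\lotimes_R F)$, which on Bass series reads $I^R_{G\lotimes F}(t)=I^R_G(t)\cdot w_F(t)$, where $w_F(t)$ records $\dim_k H_*(k\lotimes_R F)$; this is a nonzero polynomial (nonzero since $\m\in\supp_R F$, polynomial since $\fd_R F<\infty$) with nonnegative coefficients. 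As the product $I^R_{G\lotimes F}(t)$ is a polynomial and all coefficients are nonnegative, $I^R_G(t)$ must be a polynomial, i.e.\ $\id_R G<\infty$. Applying this with $G=C\lotimes_R C$ yields $\id_R(C\lotimes_R C)<\infty$; all Bass numbers in sight are finite because $C\lotimes_R C$ is homologically finite and $\fd_R F<\infty$, which is why I route the argument through $C$ rather than through $X$ directly.

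Now $\icid_R(C)=\sup(C)+\id_R(C\lotimes_R C)<\infty$, so Fact~\ref{141201:2}(b) applied to the complex $C$ gives $C\in\catac(R)$ and $C\simeq\rhom_R(C,C\lotimes_R C)$. I would then use two multiplicativity formulas for the homologically finite complex $C$. Adjunction $\rhom_R(k,\rhom_R(C,N))\simeq\rhom_R(k\lotimes_R C,N)$ gives the Bass-series identity $I^R_{\rhom(C,N)}(t)=P^R_C(1/t)\,I^R_N(t)$, and Hom-evaluation $k\lotimes_R\rhom_R(C,N)\simeq\rhom_R(\rhom_R(k,C),N)$—valid since $C\in\D^{\mathrm f}_{\mathrm b}(R)$ and $\id_R N<\infty$—gives the Poincar\'e-series identity $P^R_{\rhom(C,N)}(t)=I^R_C(1/t)\,P^R_N(t)$. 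Taking $N=C\lotimes_R C$, which has finite injective dimension, and using $\rhom_R(C,N)\simeq C$ together with $P^R_{C\lotimes C}(t)=P^R_C(t)^2$, the second identity becomes $P^R_C(t)=I^R_C(1/t)\,P^R_C(t)^2$, so that $I^R_C(1/t)\,P^R_C(t)=1$.

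The equation $I^R_C(1/t)\,P^R_C(t)=1$ is the crux. Since $P^R_C$ and $I^R_C$ are power series with nonnegative integer coefficients, a degreewise comparison forces each to be a single monomial with coefficient $1$; in particular $P^R_C(t)=t^{s}$, so $C$ has finite projective dimension and total Betti number one, whence $C\simeq\shift^{s}R$. Feeding this back into $\id_R(C\lotimes_R C)<\infty$ gives $\id_R(\shift^{2s}R)=\id_R R<\infty$, so $R$ is Gorenstein. The main obstacle is the third paragraph: arranging the Hom-evaluation isomorphism into the Poincar\'e-series identity and recognizing that the finiteness of $\id_R(C\lotimes_R C)$ is precisely what both legitimizes that evaluation and lets one cancel $P^R_C(t)$; the cancellation lemma is the secondary technical point, and both steps depend on keeping all Betti and Bass numbers finite.
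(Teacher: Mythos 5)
Your proposal does not address the statement under review. The statement is Fact~\ref{141201:2}, which characterizes the complexes of finite $\catf_C$-projective (resp.\ $\cati_C$-injective) dimension as exactly those of the form $C\lotimes_R F$ with $\fd_R(F)<\infty$ (resp.\ $\rhom_R(C,J)$ with $\id_R(J)<\infty$), together with the identifications $F\simeq\rhom_R(C,X)$, $J\simeq C\lotimes_R X$ and the memberships $X\in\catbc(R)$, $X\in\catac(R)$. What you have written is instead a proof sketch of the paper's Gorenstein-detection theorem (Theorems~\ref{140321:3} and~\ref{1405267:1}): you localize, extract $F$ and $J$, and try to force $C$ to be a shift of $R$ via Poincar\'e and Bass series. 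Moreover, your first paragraph explicitly invokes Fact~\ref{141201:2}(a) and (b) as known tools, so read as an argument for the Fact itself your proposal is circular, and read as written it never engages the equivalences that constitute the Fact. Note also that the paper offers no internal proof to compare against: the Fact is imported verbatim from \cite[Theorem 3.9]{totushek:hdwrtasc}.

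A genuine proof would unwind the definitions $\fcpd_R(X)=\sup(C)+\fd_R(\rhom_R(C,X))$ and $\icid_R(X)=\sup(C)+\id_R(C\lotimes_R X)$ and run Foxby equivalence. For part (a) the easy direction is: if $X\simeq C\lotimes_R F$ with $\fd_R(F)<\infty$, then $F\in\catac(R)$ by \cite[Proposition 4.4]{christensen:scatac}, so $\rhom_R(C,X)\simeq\rhom_R(C,C\lotimes_R F)\simeq F$ has finite flat dimension and $X\in\catbc(R)$; the substantive direction is to show that $\fd_R(\rhom_R(C,X))<\infty$ already forces the evaluation morphism $\xi^C_X\colon C\lotimes_R\rhom_R(C,X)\to X$ to be an isomorphism, i.e.\ $X\in\catbc(R)$, which is the actual content of \cite[Theorem 3.9]{totushek:hdwrtasc} and appears nowhere in your writeup; part (b) is dual. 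Separately, even judged as a proof of the main theorem, your third paragraph needs repair: Hom-evaluation $k\lotimes_R\rhom_R(C,N)\simeq\rhom_R(\rhom_R(k,C),N)$ expresses the Betti numbers of $\rhom_R(C,N)$ through the Bass numbers of $C$ \emph{and the Bass numbers of $N$}, so the identity should read $P^R_{\rhom(C,N)}(t)=I^R_C(1/t)\,I^R_N(t)$ rather than involving $P^R_N(t)$, and with $N=C\lotimes_R C$ the substitution $P^R_{C\lotimes C}(t)=P^R_C(t)^2$ you rely on is then unavailable. The paper's own route for the main theorem avoids these series manipulations entirely, passing through dagger duality $C^{\dagger}=\rhom_R(C,D)$ and \cite[Theorem 1.4]{frankild:rbsc}.
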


\section{Results}\label{150409:1}

The next result fully answers the question of Takahashi and White discussed in the introduction. 

\begin{thm}
\label{140321:3}
	Let $(R,\m,k)$ be a local ring. If there is an $R$-complex $X\in \mathcal{D}_{\operatorname{b}}(R)$ with finite depth, $\fcpd_R(X)<\infty$ and $\icid_R(X)<\infty$, then $R$ is Gorenstein.
\end{thm}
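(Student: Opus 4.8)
The key is to translate the hypotheses into statements about complexes of finite flat and finite injective dimension, and then invoke Foxby's theorem. By Fact \ref{141201:2}\eqref{141201:2a}, the condition $\fcpd_R(X)<\infty$ gives an $R$-complex $F\in\D_{\operatorname{b}}(R)$ with $\fd_R(F)<\infty$ and $X\simeq C\lotimes_R F$; moreover $F\simeq\rhom_R(C,X)$ and $X\in\catbc(R)$. Dually, by Fact \ref{141201:2}\eqref{141201:2b}, the condition $\icid_R(X)<\infty$ gives a complex $J\in\D_{\operatorname{b}}(R)$ with $\id_R(J)<\infty$ and $X\simeq\rhom_R(C,J)$, where $J\simeq C\lotimes_R X$ and $X\in\catac(R)$. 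The plan is to use the membership of $X$ in both Foxby classes to force $C$ itself to have finite injective dimension, i.e.\ to be dualizing, and then to manufacture a single complex of finite flat dimension and finite injective dimension to which the classical Foxby result applies.

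\textbf{First steps.} First I would extract from $X\in\catac(R)\cap\catbc(R)$ the consequence that $C$ is both in the Auslander and Bass classes in a way that constrains it. The cleanest route is to run the numerical identities: combining $X\simeq C\lotimes_R F$ with $\fd_R(F)<\infty$ shows $C\lotimes_R F$ has the same support as $C$ intersected with that of $F$ (via Fact \ref{140325:1}). I would then consider the complex $J\simeq C\lotimes_R X\simeq C\lotimes_R(C\lotimes_R F)$, which has finite injective dimension by hypothesis. The aim is to show $C\lotimes_R F$ already has finite injective dimension \emph{and} finite flat dimension on a suitable localization, because $F$ has finite flat dimension and $C$ is homologically finite.

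\textbf{Reduction to Foxby's theorem.} The heart of the argument is to produce, for each $\p\in\supp_R(X)$, a complex over $R_\p$ that simultaneously has finite flat dimension and finite injective dimension, and nonzero finite-support homology, so that the original Foxby theorem stated in the introduction yields that $R_\p$ is Gorenstein. Since $X\simeq C\lotimes_R F$ with $\fd_R(F)<\infty$, and $X\simeq\rhom_R(C,J)$ with $\id_R(J)<\infty$, the natural candidate is to show that the finiteness of $\icid_R(X)$ together with $X\in\catbc(R)$ forces $\id_R(C\lotimes_R F)<\infty$; then because $C$ is semidualizing and $\fd_R(F)<\infty$, standard rigidity should upgrade this to $\id_R(C)<\infty$ after localizing, making $C_\p$ dualizing. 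Once $C_\p$ is dualizing and semidualizing over the same ring, one knows $C_\p\simeq R_\p$ up to shift (a semidualizing complex that is dualizing is, up to shift and the dualizing-complex classification, forced to be trivial when $R_\p$ already admits $R_\p$ as semidualizing); equivalently, the existence of a complex of finite flat \emph{and} finite injective dimension over $R_\p$ with $\p$ in its support lets us conclude directly. The finite-depth hypothesis on $X$ guarantees, via $\depth$ over $R=R_\m$ and Fact \ref{140325:1}, that $\m\in\supp_R(X)$, so the localization at $\m$ is genuinely constrained and $R$ itself is Gorenstein.

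\textbf{Main obstacle.} The delicate point is the passage from ``$X\simeq C\lotimes_R F$ has finite injective dimension'' to ``$C$ has finite injective dimension.'' This is where the semidualizing hypothesis on $C$ must be used in full force: one needs a cancellation result asserting that if $C\lotimes_R F$ has finite injective dimension and $\fd_R(F)<\infty$ with $F\not\simeq0$ on the relevant support, then $\id_R(C)<\infty$ locally. I expect this to require the Auslander/Bass class machinery together with the behavior of injective dimension under derived tensor with a finite-flat-dimension complex, and it is the step most likely to need careful support-theoretic bookkeeping and an appeal to the finite-depth condition to avoid the degenerate infinite-depth counterexample noted in the introduction.
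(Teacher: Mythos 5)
Your proposal correctly assembles the setup (Fact \ref{141201:2} giving $X\simeq C\lotimes_R F$ with $\fd_R(F)<\infty$ and $J\simeq C\lotimes_R X$ with $\id_R(J)<\infty$, and the role of finite depth in forcing $\m\in\supp_R(X)$), but the steps that carry all the weight are deferred rather than argued, and the final inference is false as stated. The chain you propose is: first deduce $\id_R(C\lotimes_R F)=\id_R(X)<\infty$ from $\id_R(C\lotimes_R X)<\infty$ and $X\in\catbc(R)$, then ``cancel'' $F$ to get $\id_R(C)<\infty$. Neither link is justified; there is no off-the-shelf lemma of the form ``$\id_R(C\lotimes_R F)<\infty$ and $\fd_R(F)<\infty$ imply $\id_R(C)<\infty$,'' and these intermediate claims are essentially equivalent to the theorem itself. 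The paper's proof shows what such a cancellation actually costs: it carries one out (for projective rather than injective dimension, applied to $U=\rhom_R(\da{C},C)$ where $\da{C}=\rhom_R(C,D)$) via a K\"unneth computation of $\operatorname{H}_*\bigl(k\lotimes_R(F\lotimes_R U)\bigr)$, and it is precisely there that $k\lotimes_R F\not\simeq 0$ --- i.e., the finite-depth hypothesis --- is consumed. Your sketch never reaches a point where that hypothesis does any concrete work.

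The endgame is the more serious problem: ``once $C_\p$ is dualizing and semidualizing over the same ring, one knows $C_\p\simeq R_\p$ up to shift'' would prove that every local ring admitting a dualizing complex is Gorenstein, since every dualizing complex is semidualizing. Even granting $\id_R(C)<\infty$, you still owe an argument producing a single finite-depth complex with both finite flat and finite injective dimension (equivalently, that the dualizing complex $C$ also has finite flat dimension) before Foxby's theorem applies, and the proposal never identifies such a complex. The paper's route is genuinely different at this point: it does not show $C$ is dualizing, but rather that $\rhom_R(\da{C},C)$ has finite projective dimension, whence $C\simeq\shift^n\da{C}$ by \cite[Theorem 1.4]{frankild:rbsc}, and Gorensteinness follows from \cite[Corollary 3.4]{frankild:sdcms}; since that argument needs a dualizing complex $D$ to form $\da{C}$, the paper also performs a reduction via the Koszul complex and completion (its Cases 2 and 3), another layer absent from your plan. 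As it stands, the proposal is a plausible outline whose two hard steps are exactly the ones left unproved.
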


\begin{proof}
	\underline{Case 1}: \textit{$\depth_R(X)<\infty$ and $R$ has a dualizing complex $D$.}

	We first observe that by \cite[Theorem 4.6]{foxby:daafuc} we have that the following:
	\begin{align}
		\depth_R(\rhom_R(C,X)) &= \operatorname{width}_R(C) + \depth_R(X) <\infty.\label{141222:1}
	\end{align}
	Note that $\depth_R(X)$ is finite by assumption, and $\operatorname{width}_R(C)$ is finite by Nakayama's Lemma, as $C$ is homologically finite: see \cite[Lemma 2.1]{foxby:bcfm}.	

	Set $C^{\dagger} := \rhom_R(C,D)$. The assumption $\icid_R(X)<\infty$ with \cite[Theorem~1.2]{totushek:hdwrtasc} implies $\mathcal{F}_{C^{\dagger}}\text{-}\pd_R(X)<\infty$. Hence by Fact \ref{141201:2}\eqref{141201:2a} there exist $R$-complexes $F,G$ of finite flat dimension such that $C\lotimes_R F \simeq X \simeq C^{\dagger}\lotimes_R G$. Since $G$ has finite flat dimension, \cite[Proposotion 4.4]{christensen:scatac} implies $G\in \mathcal{A}_{C^{\dagger}}(R)$, which explains the first isomorphism in the following display:
	\begin{align*}
		G
			\simeq \rhom_R(C^{\dagger},C^{\dagger}\lotimes_R G)
			\simeq \rhom_R(C^{\dagger},C\lotimes_R F)
			\simeq \rhom_R(C^{\dagger},C)\lotimes_R F.
	\end{align*}
	The last isomorphism is by tensor evaluation \cite[Lemma 4.4(F)]{avramov:hdouc}.

	Fact \ref{141201:2}\eqref{141201:2a} implies $F\simeq \rhom_R(C,X)$. By \eqref{141222:1} we have $\depth_R(F)<\infty$.
	It follows from \cite[Proposition 2.8]{foxby:bcfm} that $k\lotimes_R F \not\simeq 0$. Set $U:=\rhom_R(C^{\dagger},C)$. Since $C$ and $C^{\dagger}$ are in $\mathcal{D}_{\text{b}}^{\text{f}}(R)$, we have $U\in \D_{-}^{\text{f}}(R)$.

	\underline{Claim A}: $U\in \D^{\text{f}}_{\text{b}}(R)$.
 
	To prove this claim it suffices to show that $U\in \D_{+}(R)$. Assume by way of contradiction that $\inf(U) = -\infty$. Then by \cite[4.5]{foxby:daafuc} we know that $\inf(k\lotimes_R U) = -\infty$. By tensor cancellation and the K\"{u}nneth formula we have isomorphisms
	\begin{align*}
		\operatorname{H}_{n}\left(k\lotimes_R (F\lotimes_R U) \right)
			&\cong \text{H}_{n}\left( (k\lotimes_R F)\lotimes_k (k\lotimes_R U) \right)\\
			&\cong \bigoplus_{p+q = n}\text{H}_{p}(k\lotimes_R F)\otimes_k \text{H}_q(k\lotimes_R U).
	\end{align*}
	Since $k\lotimes_R F\not\simeq 0$ and $\inf(k\lotimes_R U) = -\infty$ it follows that $\inf(k\lotimes_R (F\lotimes_R U))=-\infty$. On the other hand, since $F\lotimes_R U\simeq G\in \D_{\operatorname{b}}(R)$ we have 
$k\lotimes_R (F\lotimes_R U)\simeq k\lotimes_R G\in \D_{+}(R)$, so $\inf(k\lotimes_R (F\lotimes_R U))>-\infty$, a contradiction. This establishes Claim A.

	\underline{Claim B}: The complex $U$ has finite projective dimension.

	To show this claim assume by way of contradiction that $\pd_R(U)=\infty$. Then because $U\in\D^{\text{f}}_{\text{b}}(R)$ we have $\sup(k\lotimes_R U) = \infty$ by \cite[Proposition 5.5]{avramov:hdouc}. As in the proof of Claim A, we conclude that $\sup(k\lotimes_R (F\lotimes_R U)) = \infty$. On the other hand, we have $k\lotimes_R(F\lotimes_R U)\simeq k\lotimes_R G$. Since $G$ has finite flat dimension, this implies that $\sup(k\lotimes_R (F\lotimes_R U))<\infty$, a contradiction. This concludes the proof of Claim~B.

	Now \cite[Theorem 1.4]{frankild:rbsc} implies that $\shift^n C\simeq C^{\dagger}= \rhom_R(C,D)$ for some $n\in \bbz$. Hence by \cite[Corollary 3.4]{frankild:sdcms} we deduce that $R$ is Gorenstein. This concludes the proof of Case 1.
	
	\underline{Case 2}: $\supp_R(X) = \{\m\}$.

	For the proof of Case 2, first observe that $R$ is Gorenstein if and only if $\widehat{R}$ is Gorenstein. Since $\widehat{R}$ has a dualizing complex, by Case 1 it suffices to show that
	\begin{enumerate}[(1)]
	\item $\widehat{R}\lotimes_R X\in \D_{\text{b}}(\widehat{R})$,
	\item $\widehat{R}\lotimes_R C$ is a semidualizing $\widehat{R}$-complex,
	\item $\mathcal{F}_{\widehat{R}\lotimes_R C}\text{-}\pd_{\widehat{R}}(\widehat{R}\lotimes_R X)<\infty$,
	\item $\mathcal{I}_{\widehat{R}\lotimes_R C}\text{-}\id_{\widehat{R}}(\widehat{R}\lotimes_R X)<\infty$, and
	\item $\depth_{\widehat{R}}(\widehat{R}\lotimes_R X)<\infty$.
	\end{enumerate}

	Observe that (1) follows from the fact that $\widehat{R}$ is flat over $R$. Items (2) and (3) follow from \cite[Lemma 2.6]{christensen:scatac} and \cite[Proposition 3.11]{totushek:hdwrtasc}, respectively.
	
	To prove (4) note that the first equality in the next sequence is by definition:
	\begin{align*}
		\mathcal{I}_{\widehat{R}\lotimes_R C}\text{-}\id_{\widehat{R}}(\widehat{R}\lotimes_R X)
			&= \id_{\widehat{R}}\left( (\widehat{R}\lotimes_R C)\lotimes_{\widehat{R}} ( \widehat{R}\lotimes_R X ) \right) + \sup(\widehat{R}\lotimes_R C) \\
			&= \id_{\widehat{R}}\left( \widehat{R}\lotimes_R (C\lotimes_R X) \right) + \sup(\widehat{R}\lotimes_R C).
	\end{align*}
	The second equality is by tensor cancellation. From the condition $\icid_R(X)<\infty$, we have $\id_R(C\lotimes_R X)<\infty$ by definition. Note that $\m\in\spec(R)=\supp_R(C)$ by \cite[Proposition 6.6]{sather:saaffc}. Therefore Fact \ref{140325:1} implies 
	\[
		\supp_R(C\lotimes_R X) = \supp_R(C)\cap \supp_R(X) = \{\m\}.
	\]
	Hence by \cite[Lemma 3.4]{sather:cidfc} the complex $\widehat{R}\lotimes_R(C\lotimes_R X)$ has finite injective dimension over $\widehat{R}$, so (4) holds.

	For the proof of (5) consider the following sequence:
	\begin{align*}
	\depth_{\widehat{R}}(\widehat{R}\lotimes_R X)
		&= -\sup\left( \rhom_{\widehat{R}}(k,\widehat{R}\lotimes_R X )\right)\\
		&= -\sup\left( \rhom_{\widehat{R}}(\widehat{R}\lotimes_R k,\widehat{R}\lotimes_R X) \right)\\
		&= -\sup\left( \widehat{R}\lotimes_R \rhom_{R}(k,X) \right)\\
		&= -\sup\left( \rhom_R(k,X) \right)\\
		&= \depth_R(X) \\
		&< \infty.
	\end{align*}
	The second equality is because $k\cong \widehat{R}\lotimes_R k$, and the fourth equality is because $\widehat{R}$ is faithfully flat over $R$. This establishes (5) and concludes the proof of Case 2.

	\underline{Case 3}: general case.

	Let $\x$ be a generating sequence for $\m$, and let $K= K^R(\x)$ be the Koszul complex. Then $\supp_R(K) = \{\m\}$. Since $\depth_R(X)<\infty$, we have that $\m\in \supp_R(X)$ by \cite[Proposition 2.8]{foxby:bcfm}. Hence, we conclude from Fact \ref{140325:1} that
	\[
		\supp_R(K\lotimes_R X) = \supp_R(K)\cap \supp_R(X) = \{\m\}.
	\]

	By Case 2 it suffices to show that
	\begin{enumerate}[(a)]
	\item $\depth_R(K\lotimes_R X)<\infty$,
	\item $K\lotimes_R X\in \D_{\text{b}}(R)$,
	\item $\fcpd_R(K\lotimes_R X)<\infty$, and
	\item $\icid_R(K\lotimes_R X)<\infty$.
	\end{enumerate}

	Item (a) follows from \cite[Proposition 2.8]{foxby:bcfm}. For (b), use the conditions $\pd_R(K)<\infty$ and $X\in \D_{\text{b}}(R)$. Items (c) and (d) follow from \cite[Proposition 4.5 and 4.7]{totushek:hdwrtasc}. This concludes the proof of Case 3.
\end{proof}

The following result is Theorem \ref{141215:1} from the introduction.

\begin{thm}\label{1405267:1}
	If there is an $R$-complex $X\in\D_{\operatorname{b}}(R)$ such that $\fcpd_R(X)<\infty$ and $\icid_R(X)<\infty$, then $R_{\p}$ is Gorenstein for all $\p\in\supp_R(X)$.
\end{thm}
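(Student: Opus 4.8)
The plan is to reduce the global statement (Theorem \ref{1405267:1}) to the local statement already proved in Theorem \ref{140321:3} by localizing at an arbitrary prime $\p \in \supp_R(X)$. So fix such a $\p$ and work over the local ring $R_\p$. First I would check that $C_\p$ is a semidualizing $R_\p$-complex; this is standard (localization commutes with $\rhom$ for homologically finite complexes, so the homothety morphism $\chi^{R_\p}_{C_\p}$ is the localization of $\chi^R_C$, hence an isomorphism), and is presumably recorded in one of the cited references on semidualizing complexes. Next I would verify that the two homological-dimension hypotheses localize: since $\fcpd_R(X) < \infty$, Fact \ref{141201:2}\eqref{141201:2a} gives $X \simeq C \lotimes_R F$ with $\fd_R(F) < \infty$; localizing yields $X_\p \simeq C_\p \lotimes_{R_\p} F_\p$ with $\fd_{R_\p}(F_\p) \le \fd_R(F) < \infty$, so $\fcpd_{R_\p}(X_\p) < \infty$ again by Fact \ref{141201:2}\eqref{141201:2a}. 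Dually, using Fact \ref{141201:2}\eqref{141201:2b} and the fact that injective dimension does not increase under localization, $\icid_R(X) < \infty$ gives $\icid_{R_\p}(X_\p) < \infty$. Also $X_\p \in \D_{\mathrm b}(R_\p)$ since localization is exact.

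The one genuinely substantive point — and the step I expect to be the main obstacle — is establishing that $X_\p$ has \emph{finite depth} over $R_\p$, which is exactly what is needed to invoke Theorem \ref{140321:3}. The subtlety is that $\depth_{R_\p}(X_\p)$ need not be finite just because $\p \in \supp_R(X)$; for instance, the supremum issue. The key input is that $\p \in \supp_R(X) = \supp_{R_\p}(X_\p)$ (using that $\kappa(\p) \lotimes_R X \simeq \kappa(\p) \lotimes_{R_\p} X_\p$), so $\kappa(\p) \lotimes_{R_\p} X_\p \not\simeq 0$, i.e. $\operatorname{width}_{R_\p}(X_\p) < \infty$. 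I would combine this with the finiteness already in hand: from $X_\p \simeq C_\p \lotimes_{R_\p} F_\p$, the depth formula $\depth_{R_\p}(X_\p) = \depth_{R_\p}(C_\p) + \operatorname{width}_{R_\p}(F_\p)$ (or the $\rhom$ version via \cite[Theorem 4.6]{foxby:daafuc}) should express $\depth_{R_\p}(X_\p)$ in terms of quantities that are finite: $\operatorname{width}_{R_\p}(F_\p)$ is finite since $F_\p \not\simeq 0$ in $\D_{\mathrm b}$ with $\fd < \infty$ (Nakayama / \cite[Lemma 2.1]{foxby:bcfm}-type argument), and $\depth_{R_\p}(C_\p)$ is finite because $C_\p$ is homologically finite and nonzero over the local ring $R_\p$. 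Alternatively — and perhaps more cleanly — since $\fcpd$ and $\icid$ are both finite, one can run essentially the argument of Theorem \ref{140321:3} but observe that the finiteness of $\depth$ is only used to guarantee $\m_\p \in \supp(X_\p)$, which here is given directly; so the right move may be to re-examine which case of the proof of Theorem \ref{140321:3} actually applies and note that Case 2 or Case 3 can be entered using $\p \in \supp_R(X)$ in place of the finite-depth hypothesis.

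Concretely, I would structure the write-up as: (i) fix $\p \in \supp_R(X)$ and reduce to showing $R_\p$ is Gorenstein; (ii) cite/verify that $C_\p$ is semidualizing over $R_\p$; (iii) show $X_\p \in \D_{\mathrm b}(R_\p)$ with $\fcpd_{R_\p}(X_\p) < \infty$ and $\icid_{R_\p}(X_\p) < \infty$ via Fact \ref{141201:2} and stability of $\fd$, $\id$ under localization; (iv) show $\depth_{R_\p}(X_\p) < \infty$ using $\p \in \supp_R(X)$ together with the width–depth relation and the finiteness from (iii); (v) apply Theorem \ref{140321:3} to $R_\p$ to conclude. Most of steps (ii)–(iii) are routine localization bookkeeping citing the literature already referenced in the excerpt; the only place demanding care is (iv), pinning down precisely why the small-support condition forces finite depth after localization. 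If that depth argument turns out to need the Koszul-complex maneuver from Case 3 of Theorem \ref{140321:3} localized, I would invoke it directly rather than re-deriving it.
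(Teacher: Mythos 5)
Your proposal is correct and follows essentially the same route as the paper: localize at $\p$, verify that $C_\p$ is semidualizing, that $X_\p\in\D_{\operatorname{b}}(R_\p)$ with both homological dimensions finite, and that $\depth_{R_\p}(X_\p)<\infty$ via $\p R_\p\in\supp_{R_\p}(X_\p)$, then apply Theorem \ref{140321:3}. The only difference is cosmetic: for the depth step the paper cites \cite[Proposition 2.8]{foxby:bcfm} directly (membership of the maximal ideal in the small support is equivalent to finite depth), so your detour through the width--depth formula and the Koszul maneuver is unnecessary.
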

\begin{proof}
	By Theorem \ref{140321:3} it suffices to show the following:
	\begin{enumerate}[(i)]
	\item $X_{\p}\in \D_{\text{b}}(R_{\p})$,
	\item $C_{\p}$ is a semidualizing $R_{\p}$-complex,
	\item $\mathcal{F}_{C_{\p}}\text{-}\pd_{R_{\p}}(X_{\p})<\infty$,
	\item $\mathcal{I}_{C_{\p}}\text{-}\id_{R_{\p}}(X_{\p})<\infty$, and
	\item $\depth_{R_\p}(X_{\p})<\infty$.
	\end{enumerate}

	Item (i) follows from the fact that $R_{\p}$ is a flat over $R$, and item (ii) follows from \cite[Lemma 2.5]{christensen:scatac}. Items (iii) and (iv) are by \cite[Corollary 3.12]{totushek:hdwrtasc}.

	(5) As $\p\in\supp_R(X)$, we have $\p R_{\p}\in \supp_{R_{\p}}(X_{\p})$ by \cite[Proposition 3.6]{sather:saaffc}. Since $\p R_{\p}$ is the maximal ideal of the local ring $R_{\p}$, we deduce from \cite[Proposition 2.8]{foxby:bcfm} that $\depth_{R_{\p}}(X_{\p}) < \infty$.
\end{proof}

\subsection*{Acknowledgments}
We are grateful to the referee for thoughtful comments and to Ryo Takahashi for bringing \cite{araya:agoatof} to our attention.


\begin{thebibliography}{10}

\bibitem{araya:agoatof}
T.~Araya and R.~Takahashi, \emph{A generalization of a theorem of {F}oxby},
  Arch. Math. (Basel) \textbf{93} (2009), no.~2, 123--127. \MR{2534419
  (2010h:13025)}

\bibitem{takahashi:hiatsb}
T.\ Araya, R.\ Takahashi, and Y.\ Yoshino, \emph{Homological invariants
  associated to semi-dualizing bimodules}, J. Math. Kyoto Univ. \textbf{45}
  (2005), no.~2, 287--306. \MR{2161693}

\bibitem{avramov:hdouc}
L.~L. Avramov and H.-B.\ Foxby, \emph{Homological dimensions of unbounded
  complexes}, J. Pure Appl. Algebra \textbf{71} (1991), 129--155.
  \MR{93g:18017}

\bibitem{avramov:rhafgd}
\bysame, \emph{Ring homomorphisms and finite {G}orenstein dimension}, Proc.
  London Math. Soc. (3) \textbf{75} (1997), no.~2, 241--270. \MR{98d:13014}

\bibitem{christensen:scatac}
L.~W. Christensen, \emph{Semi-dualizing complexes and their {A}uslander
  categories}, Trans. Amer. Math. Soc. \textbf{353} (2001), no.~5, 1839--1883.
  \MR{2002a:13017}

\bibitem{foxby:gmarm}
H.-B.\ Foxby, \emph{Gorenstein modules and related modules}, Math. Scand.
  \textbf{31} (1972), 267--284 (1973). \MR{48 \#6094}

\bibitem{foxby:bcfm}
\bysame, \emph{Bounded complexes of flat modules}, J. Pure Appl. Algebra
  \textbf{15} (1979), no.~2, 149--172. \MR{535182 (83c:13008)}

\bibitem{foxby:daafuc}
H.-B.\ Foxby and S.\ Iyengar, \emph{Depth and amplitude for unbounded
  complexes}, Commutative algebra. Interactions with Algebraic Geometry,
  Contemp. Math., vol. 331, Amer. Math. Soc., Providence, RI, 2003,
  pp.~119--137. \MR{2 013 162}

\bibitem{frankild:sdcms}
A.\ Frankild and S.\ Sather-Wagstaff, \emph{The set of semidualizing complexes
  is a nontrivial metric space}, J. Algebra \textbf{308} (2007), no.~1,
  124--143. \MR{2290914}

\bibitem{frankild:rbsc}
A.~J. Frankild, S.\ Sather-Wagstaff, and A.~Taylor, \emph{Relations between
  semidualizing complexes}, J. Commut. Algebra \textbf{1} (2009), no.~3,
  393--436. \MR{2524860}

\bibitem{gelfand:moha}
S.~I. Gelfand and Y.~I. Manin, \emph{Methods of homological algebra},
  Springer-Verlag, Berlin, 1996. \MR{2003m:18001}

\bibitem{hartshorne:rad}
R.\ Hartshorne, \emph{Residues and duality}, Lecture Notes in Mathematics, No.
  20, Springer-Verlag, Berlin, 1966. \MR{36 \#5145}

\bibitem{sather:cidfc}
S.\ Sather-Wagstaff, \emph{Complete intersection dimensions and {F}oxby
  classes}, J. Pure Appl. Algebra \textbf{212} (2008), no.~12, 2594--2611.
  \MR{2452313 (2009h:13015)}

\bibitem{sather:bnsc}
\bysame, \emph{Bass numbers and semidualizing complexes}, Commutative algebra
  and its applications, Walter de Gruyter, Berlin, 2009, pp.~349--381.
  \MR{2640315}

\bibitem{sather:saaffc}
S.~Sather-Wagstaff and R.~Wicklein, \emph{Support and adic finiteness for
  complexes}, preprint (2014) \texttt{arxiv:1401.6925}.

\bibitem{sather:mfhdsdm}
S.\ Sather-Wagstaff and S.\ Yassemi, \emph{Modules of finite homological
  dimension with respect to a semidualizing module}, Arch. Math. (Basel)
  \textbf{93} (2009), no.~2, 111--121. \MR{2534418}

\bibitem{takahashi:hasm}
R.\ Takahashi and D.\ White, \emph{Homological aspects of semidualizing
  modules}, Math. Scand. \textbf{106} (2010), no.~1, 5--22. \MR{2603458}

\bibitem{totushek:hdwrtasc}
J.~{Totushek}, \emph{{Homological Dimensions with Respect to a Semidualizing
  Complex}}, preprint (2014) \texttt{arxiv:1411.6563}.

\bibitem{yassemi:wcm}
S.~Yassemi, \emph{Width of complexes of modules}, Acta Math. Vietnam.
  \textbf{23} (1998), no.~1, 161--169. \MR{1628029 (99g:13026)}

\end{thebibliography}
\bibliographystyle{amsplain}

\providecommand{\bysame}{\leavevmode\hbox to3em{\hrulefill}\thinspace}
\providecommand{\MR}{\relax\ifhmode\unskip\space\fi MR }
\providecommand{\MRhref}[2]{%
  \href{http://www.ams.org/mathscinet-getitem?mr=#1}{#2}
}
\providecommand{\href}[2]{#2}

\end{document}